\newtheorem{theorem}{Theorem}
\newtheorem*{mtheorem}{Main Theorem}
\newtheorem{definition}{Definition}
\newtheorem{prop}{Proposition}
\newtheorem{remark}{Remark}
\begin{document}

\title[Formality of almost abelian solvmanifolds]{Formality and symplectic structures of almost abelian solvmanifolds}

\author{Maura Macr\`i} 
\address{M. Macr\`i: Dipartimento di Matematica G. Peano \\ Universit\`a di Torino\\
Via Carlo Alberto 10\\
10123 Torino\\ Italy.} \email{maura.macri@unito.it}

\begin{abstract}
In this paper we study some properties of almost abelian solvmanifolds using minimal models associated to a fibration. In particular we state a necessary and sufficient condition to formality and a method for finding symplectic strucures of this kind of solvmanifolds.
\end{abstract}

\maketitle

\section{Introduction}

Nilmanifolds and solvmanifolds are compact quotients of (respectively) nilpotent and solvable Lie groups  by a lattice.
They have been intensively studied from many points of view (geometry, topology, group theory) since they are, on the one hand, spaces for which the computation of some of their invariants is tractable and, on the other hand, they are involved enough to show all sort of different behaviors.

Typical examples in this context are the Nomizu Theorem for nilmanifolds, which states that their de Rham cohomology agrees with the one of the Lie algebra  \cite{nomizu}, and Benson and Gordon result on the non existence of any K\"ahler structure on a nilmanifold (unless it is a torus) \cite{BG}. 
K\"ahler manifolds are quite relevant within rational homotopy theory: in \cite{DGMS} it was shown that a compact K\"ahler manifold is formal. Hasegawa, using an explicit description of the minimal model of a nilmanifold,  proved that a non toral nilmanifold cannot be formal \cite{ha}, yielding an alternate proof of the above mentioned result by Benson and Gordon. In the case of solvmanifolds, Hasegawa proved in
\cite{Ha2} that a solvmanifold carries a K\"ahler metric if and only if it is covered by a finite quotient of a complex torus, which has the structure of a complex torus bundle over a complex torus. 

In this paper we study the formality and the symplectic structures of almost abelian solvmanifolds, i.e. compact homogeneous spaces $S=G/\Gamma$, where the solvable Lie group $G$ is a semidirect product $\mathbb{R}\ltimes\mathbb{R}^n$  and \mbox{$\Gamma=\mathbb{Z}\ltimes\mathbb{Z}^n.$}

Our idea starts from a work of Oprea and Tralle in which the theory of minimal models is used to compute the cohomology of some solvmanifolds \cite{OT2}. 
Indeed the above mentioned result of Nomizu does not apply in gene-\\ral to the cohomology of solvmanifolds.
Almost abelian solvmanifolds are probably the most tractable class of solvmanifolds whose cohomology does not in general agree with the one of the Lie algebra.

In particular, in \cite{OT2}, the authors use a theorem of Felix and Thomas (Theorem \ref{fibration}) in which is described the model of a fibration and they apply it to the \textit{Mostow fibration} 
\begin{equation}\label{mostow}
N / \Gamma_N = (N \Gamma) / \Gamma \hookrightarrow G / \Gamma\longrightarrow G / (N \Gamma) = \mathbb{T}^k,
\end{equation} 
where $N$ is the nilradical of $G$, associated to every solvmanifold.

\smallskip 

This construction is related to a submodule $U$ of the cohomology algebra of the abelian Lie algebra $\mathbb{R}^n \; H^*(\mathbb{R}^n)$ computed using (\ref{mostow}) and defined in \cite{OT, OT2} (cf. Section~\ref{preliminaries}).

Rather than using this theory to compute the cohomology of the solvmanifold (cf. \cite{noi}), we find some of its properties. Indeed, while in general the submodule $U$ is difficult to compute, its construction is quite simple for almost abelian solvmanifolds. Hence we are able to find some properties of $U$ and relate them to those of the solvmanifold.

Let $(\mathcal{M}_U,d)$ be a minimal commutative differential graded algebra (cdga), such that its cohomology algebra is isomorphic to $U$, then this algebra is a subspace of the minimal model $(\mathcal{M}_S,D)$ of the solvmanifold $S$, (Theorem \ref{fibration}). 

In particular in section \ref{formal} we find a necessary and sufficient condition for the formality of $S$: 
\begin{mtheorem}
If $\mathcal{M}_S$ is of finite type, then $S$ is formal if and only if $\ker D|_{\mathcal{M}_U}=\ker d$.
\end{mtheorem}

\medskip

In section \ref{symplectic} we give a method to find symplectic forms on an almost abelian solvmanifold and in the last section we give two examples of application of the Main Theorem.

\bigskip 

\section{Preliminaries}\label{preliminaries}

\begin{definition}
An almost abelian solvmanifold is a quotient $S=G/\Gamma$ where the solvable Lie group $G$ and its lattice $\Gamma$ are semidirect products of kind \\$G=\mathbb{R}\ltimes_{\varphi}\mathbb{R}^n, \quad \Gamma=\mathbb{Z}\ltimes_{\varphi|_{\mathbb{Z}}}\mathbb{Z}^n$.
\end{definition}

In particular if $\mathfrak{g}$ is the Lie algebra of $G$, then $\mathfrak{g}=\mathbb{R}\ltimes_{\mbox{ad}_{X_{n+1}}}\mathbb{R}^n$, where $\mathbb{R}=\langle X_{n+1} \rangle$ and $\mathbb{R}^n=\langle X_1, \cdots, X_n \rangle$, and $\varphi(t):= e^{t\mbox{ad}_{X_{n+1}}}$.

\medskip 

In this case the Mostow fibration is $$\mathbb{R}^n / \mathbb{Z}^n \hookrightarrow S
\longrightarrow \mathbb{R} / \mathbb{Z}$$

and we want to apply to this fibration the following theorem of Felix and Thomas

\begin{theorem} \label{fibration}\cite{OT, OT2}
Let $F \rightarrow E \rightarrow B$ be a fibration and let $U$ be the largest $\pi_1(B)$-submodule of $H^*(F,\mathbb{Q})$ on which $\pi_1(B)$ acts nilpotently. Suppose that $H^*(F,\mathbb{Q})$ is a vector space of finite type and that $B$ is a nilpotent space, then in the Sullivan model of the fibration 

\begin{displaymath}
\xymatrix{\mathcal{A}(B) \ar[r]  & \mathcal{A}(E) \ar[r] & \mathcal{A}(F) \\
(\bigwedge X, d_X) \ar[r] \ar[u]^\sigma & (\bigwedge (X\oplus Y), D)  \ar[u]^\tau \ar[r] & (\bigwedge Y, d_Y) \ar[u]^\rho }
\end{displaymath}
the cdga homomorphism $\rho: (\bigwedge Y, d_Y)  \to \mathcal{A}(F) $ induces an isomorphism $\rho^*: H^*(\bigwedge Y, d_Y)\rightarrow U$.
\end{theorem}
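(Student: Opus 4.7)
The plan is to combine the standard construction of a relative minimal model for a fibration with an analysis of the holonomy action of $\pi_1(B)$ on $H^*(F,\mathbb{Q})$, in order to identify precisely the submodule of the fiber's cohomology recovered by the Koszul--Sullivan (KS) extension.

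First, since $B$ is a nilpotent space I would take its minimal Sullivan model $\sigma : (\bigwedge X, d_X) \to \mathcal{A}(B)$ and extend it to a KS-extension
\[
(\bigwedge X, d_X) \hookrightarrow (\bigwedge(X\oplus Y), D),
\]
choosing generators of $Y$ inductively by degree so as to kill relative cohomology against $\mathcal{A}(E)$. The lifting lemma for cdga morphisms along acyclic cofibrations yields a quasi-isomorphism $\tau$ making the middle square commute; quotienting by the ideal generated by $X^{+}$ then produces $(\bigwedge Y, d_Y)$ together with the induced map $\rho$ to $\mathcal{A}(F)$.

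Next I would check that $\rho^*$ lands in $U$. Any class $[y]\in H^*(\bigwedge Y,d_Y)$ lifts, modulo the ideal $(X^{+})$, to a $D$-closed element of the total complex; the holonomy of $\pi_1(B)$ acts on $H^*(\bigwedge Y,d_Y)$ through the unipotent completion of $\pi_1(B)$ induced by the minimal model of $B$, and this action is automatically nilpotent. Consequently $\rho^*[y]$ lies in a $\pi_1(B)$-submodule on which the action is nilpotent, hence inside $U$.

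The main step, and the main obstacle, is surjectivity of $\rho^* : H^*(\bigwedge Y, d_Y) \to U$. The natural approach is the Serre spectral sequence with local coefficients: its $E_2$-term $H^*(B;\mathcal{H}^*(F))$ couples $H^*(B)$ only to the nilpotent part of the fiber's cohomology, so the KS-model can compute at most $U$. For the converse, given $u\in U$ I would inductively build a preimage in $(\bigwedge(X\oplus Y),D)$ by resolving successive lifting obstructions whose vanishing follows from the nilpotency of the $\pi_1(B)$-action on the iterated kernels of $(\phi-\mathrm{id})$, $\phi\in\pi_1(B)$. The cleanest way to close this step is to invoke the Grivel--Halperin--Thomas theorem identifying the fiber of a KS-extension with the nilpotent completion of the actual fiber, as already exploited in \cite{OT, OT2}.
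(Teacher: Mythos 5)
This statement is quoted in the paper from \cite{OT, OT2} (it is a theorem of F\'elix and Thomas) and is not proved there, so there is no in-paper argument to compare yours against; I can only assess your sketch on its own merits.

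The construction of the KS-extension and the verification that $\rho^*$ lands in $U$ are reasonable in outline: the image of $\rho^*$ is exhausted by a finite filtration of $\pi_1(B)$-submodules coming from the word-length filtration of $\bigwedge Y$, so the action on it is nilpotent. The genuine gap is in the surjectivity step, which is the actual content of the theorem. The Grivel--Halperin--Thomas theorem you invoke says that $\rho^*\colon H^*(\bigwedge Y,d_Y)\to H^*(F;\mathbb{Q})$ is an isomorphism \emph{under the hypothesis} that $\pi_1(B)$ already acts nilpotently on $H^*(F;\mathbb{Q})$; it does not, in general, identify $H^*(\bigwedge Y,d_Y)$ with ``the nilpotent completion of the actual fiber,'' and the theorem at hand is precisely the extension of GHT to the non-nilpotent situation. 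Invoking it to close the argument is therefore circular. Likewise, the assertion that the Serre spectral sequence ``couples $H^*(B)$ only to the nilpotent part of the fiber's cohomology'' is false at the $E_2$-level: $E_2=H^*(B;\mathcal{H}^*(F))$ is computed from the full local system, and one must actually prove that the complementary, non-nilpotent summand contributes nothing. For $B=S^1$ this amounts to showing that $\phi-\mathrm{id}$ is invertible on that summand, so that the group cohomology $H^*(\mathbb{Z};-)$ of it vanishes; for a general nilpotent base this is exactly where F\'elix and Thomas do the real work. As written, your proposal defers the only hard step to a theorem that does not apply in the generality needed.
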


\bigskip 

We give here only basic definitions of cdga and minimal models and we refer to \cite{FOT} for a depth study of these topics.

\begin{definition}
Let $\mathbb{K}$ ba a field of characteristic $0$. A graded $\mathbb{K}$-vector space is a family of $\mathbb{K}$-vector spaces $\mathcal{A}=\{\mathcal{A}^{p}\}_{p\geq 0}$. An element of $\mathcal{A}$ has degree $p$ if it belongs to  $\mathcal{A}^{p}$. \\A commutative differential graded $\mathbb{K}$-algebra, cdga, $(\mathcal{A},d)$ is a graded $\mathbb{K}$-vector space $\mathcal{A}$ together with a multiplication $$\mathcal{A}^{p}\otimes\mathcal{A}^{q}\rightarrow\mathcal{A}^{p+q}$$ that is associative, with unit $1 \in \mathcal{A}^{0}$ and commutative in the graded sense, i.e. $\forall a \in \mathcal{A}^{p}, b \in \mathcal{A}^{q}\quad a\cdot b= (-1)^{pq}b\cdot a$, and with a differential $$d:\mathcal{A}^{p}\rightarrow\mathcal{A}^{p+1}$$ such that $d^2=0$ and $\;\forall\, a \in \mathcal{A}^{p}, b \in \mathcal{A}^{q}\quad d(a\cdot b)=da\cdot b +(-1)^{p}a\cdot db$.
\end{definition}

\medskip 

Given a $\mathbb{K}$-cdga $(\mathcal{A},d)$ its cohomology algebra $H^{*}(\mathcal{A},\mathbb{K})$ is well defined and it is a $\mathbb{K}$-cdga with $d\equiv 0$.

\medskip 

\begin{definition}\label{morp}
A cdga homomorphism $f:(\mathcal{A},d_{\mathcal{A}})\rightarrow(\mathcal{B},d_{\mathcal{B}})$ is a family of homomorphisms $f^p:\mathcal{A}^p\rightarrow\mathcal{B}^p$ such that $f^{p+q}(a\cdot b)=f^p(a)\cdot f^q(b)$ and  $d_{\mathcal{B}}f^p=f^pd_{\mathcal{A}}$.
\end{definition}

\medskip 

\begin{definition}
A cdga $(\mathcal{M},d)$ is minimal if it is free commutative,\\ i.e. $\mathcal{M}=\bigwedge V$ with $V$ graded vector space, and there exist a ordered basis $\{x_{\alpha}\}$ of $V$ such that $V^0=\mathbb{K},\;dV\subset \bigwedge^{\geq 2}V$ and $dx_{\alpha}\in \bigwedge(x_{\beta})_{\beta<\alpha}$, where with $\bigwedge^{\geq 2}V$ we mean $\bigwedge^{i}V$ with $i\geq 2$.
\\ A minimal model of the cdga $(\mathcal{A},d)$ is a minimal cdga $(\mathcal{M},d)$ together with a cdga quasi isomorphism $\psi:\mathcal{M}\rightarrow\mathcal{A}$, i.e. a morphism that induces an isomorphism on cohomology. 
\end{definition}

\medskip 

For every topological space $T$ Sullivan defined a $\mathbb{Q}$-cdga $\mathcal{A}(T)$ associated to $T$. We refer to \cite{FOT} for its definition, we only need to know that its cohomology is the cohomology of the space $T$ over the constant sheaf $\mathbb{Q}$, then we can apply Theorem \ref{fibration} to differential manifolds.

In particular by definition of Sullivan model of a fibration \cite{FOT}, we have that
\begin{itemize}
\item $(\bigwedge X, d_X)$ and $(\bigwedge Y, d_Y)$ are minimal cdga,
\item $\sigma$ and $\tau$ are quasi isomorphisms,
\item $\forall x \in X \; Dx=d_Xx$ and $\forall y \in Y \; Dy=d_Yy+cx\wedge y'$ with $c \in \mathbb{Q}, \, x\in \bigwedge X^+$ and $y' \in \bigwedge Y^{< y}$, where with $\bigwedge X^+$ we mean all the elements in $\bigwedge X$ with degree greater than $0$ and with $\bigwedge Y^{< y}$ the subalgebra of $\bigwedge Y$ generated by all the generators prior to $y$ with respect to an order among the basis of $Y$.
\end{itemize}

\medskip 

To apply this result to differential manifolds we consider the field $\mathbb{R}$ instead of $\mathbb{Q}$ and in the case of the Mostow fibration associated to an almost abelian solvmanifold we have that $X$ is generated only by a closed element $A$ of degree one, i.e. $(\bigwedge X, d_X)= (\bigwedge (A),0)$.

\smallskip 

In general, finding $U$ is very difficult, but when the solvmanifold is almost abelian the monodromy action of $\mathbb{Z} \cong \pi_1(\mathbb{R} / \mathbb{Z})$ on $H^*(\mathbb{R}^n / \mathbb{Z}^n)$ is exploited by the transpose of the twist action that defines the semidirect sum \mbox{$\mathfrak{g}=\mathbb{R}\ltimes \mathbb{R}^n$,} this means that the action is given by $$(\bigwedge \varphi^t)^*:\mathbb{Z} \rightarrow \mbox{Aut}(H^*(\mathbb{R}^n)), $$ where $(\bigwedge \varphi^t)^*$ is the restriction of $\varphi$ to $\mathbb{Z}$ induced on the exterior algebra of the dual (Chevalley-Eilenberg complex) and then on the cohomology (see \cite[Theorems 3.7 and 3.8]{OT}).

To simplify the notation we denote the action $(\bigwedge \varphi^t)^*$ with $\varphi$. \\By definition of nilpotent action we have that a form $\alpha$ is in $U$ if and only if there exist a constant $k \in \mathbb{N}^+$ such that $(\varphi-\mbox{Id})^k(\alpha)=0$, where Id is the identity map.

\medskip

The definition of $U$ can be further simplified by the following proposition

\begin{prop}\label{kasuya}
$\alpha \in U$ if and only if $\varphi_s(\alpha)=\alpha$, where $\varphi_s$ is the semisimple part of $\varphi$.
\end{prop}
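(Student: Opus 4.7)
The plan is to recognize $U$ as the generalized eigenspace of $\varphi$ for the eigenvalue $1$, and then to reduce both implications to standard facts about the Jordan decomposition of a linear operator on a finite-dimensional vector space.

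First I would invoke the Jordan decomposition of the linear automorphism $\varphi$ acting on the finite-dimensional vector space $H^*(\mathbb{R}^n)$. Working over the complexification if necessary, we obtain the splitting into generalized eigenspaces $H^*(\mathbb{R}^n)\otimes\mathbb{C} = \bigoplus_\lambda V_\lambda$, where each $V_\lambda$ is simultaneously the $\lambda$-eigenspace of $\varphi_s$ and a $\varphi$-invariant subspace on which $\varphi - \lambda\,\mbox{Id}$ is nilpotent. The semisimple and unipotent parts $\varphi_s$ and $\varphi_u$ are polynomials in $\varphi$, hence commute with $\varphi$ and with each other, and $\varphi = \varphi_s\varphi_u$.

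For the implication $(\Leftarrow)$, if $\varphi_s(\alpha)=\alpha$ then $\alpha \in V_1$, and the equality $\varphi = \varphi_s\varphi_u$ gives $(\varphi - \mbox{Id})(\alpha) = (\varphi_u - \mbox{Id})(\alpha)$; since $\varphi_u - \mbox{Id}$ is nilpotent and preserves $V_1$, iteration yields $(\varphi - \mbox{Id})^k(\alpha)=0$ for $k$ large, so $\alpha\in U$. For the implication $(\Rightarrow)$, decompose $\alpha = \sum_\lambda \alpha_\lambda$ according to the generalized eigenspaces. On $V_\lambda$ with $\lambda \neq 1$, the operator $\varphi - \mbox{Id}$ is the sum of the invertible scalar $(\lambda - 1)\mbox{Id}$ and a commuting nilpotent, hence is invertible; therefore $(\varphi - \mbox{Id})^k\alpha = 0$ forces $\alpha_\lambda = 0$ for every $\lambda \neq 1$. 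Thus $\alpha \in V_1$, on which $\varphi_s$ acts as the identity.

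The only technical point is that $\varphi_s$ and the eigenspace decomposition are \emph{a priori} defined over $\mathbb{C}$, while $\alpha$ lives in the real cohomology. This is routine: since $\varphi$ is real, the decomposition is stable under complex conjugation, the operator $\varphi_s$ descends to a real operator, and the identity $\varphi_s(\alpha)=\alpha$ is equivalent to its complexification. Beyond this bookkeeping the argument is pure linear algebra, so I do not expect any substantial obstacle.
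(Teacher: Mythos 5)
Your argument is correct and rests on the same underlying idea as the paper's proof: complexify, invoke the Jordan--Chevalley decomposition $\varphi=\varphi_s\varphi_u$, and identify $U$ (characterized in the paper, just before the proposition, as $\{\alpha : (\varphi-\mbox{Id})^k\alpha=0 \text{ for some }k\}$) with the generalized eigenspace of $\varphi$ for the eigenvalue $1$, i.e.\ the fixed space of $\varphi_s$. The difference is one of execution rather than strategy. The paper works with explicit Jordan block matrices: it writes $\varphi^c=e^{\lambda t}\varphi_u$ block by block, proves $(\varphi^c)^p=e^{p\lambda t}\varphi_u^p$ by induction, and establishes the implication $(\varphi^c-\mbox{Id})^k\alpha=0\Rightarrow e^{\lambda t}=1$ through a binomial expansion of $\bigl(e^{\lambda t}(\varphi_u-\mbox{Id})+(e^{\lambda t}-1)\mbox{Id}\bigr)^h$. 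You instead use the two standard abstract facts that the explicit computation is secretly reproving: that $\varphi-\mbox{Id}$ is invertible on each generalized eigenspace $V_\lambda$ with $\lambda\neq 1$ (invertible scalar plus commuting nilpotent), and that $\varphi-\mbox{Id}=\varphi_s(\varphi_u-\mbox{Id})+(\varphi_s-\mbox{Id})$ restricts to a nilpotent operator on $V_1$. This buys you a shorter and arguably more robust argument (the paper's factoring of $(e^{\lambda t}-\mbox{Id})(\alpha)$ out of the binomial sum is delicate as written, since it mixes scalars and operators applied to $\alpha$), at the cost of the concrete block-by-block description that the paper later reuses to give the geometric decomposition $U^c=\bigoplus V_{\lambda_{i_1}}\wedge\cdots\wedge V_{\lambda_{i_k}}$ over index sets with $\sum_p\lambda_{i_p}t=0$. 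Your handling of the descent from $\mathbb{C}$ to $\mathbb{R}$ is also sound, and matches step (1) of the paper.
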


\begin{proof}
We give the proof in 4 steps:
\begin{enumerate}
\item \textit{we can prove the proposition on the complexification:}\\
let $V$ a generic real vector space generated by $\{v_1,\cdots,v_n\}$, then its complexification $V^c$ is generated by elements $w_{jk}:=v_j+iv_k$. Given an endomorphism $\varphi$ of $V$, we can extend it to the complexification, $\varphi^c$, and we can define the unipotent spaces:
$$U:=\{v \in V / \,\exists p, (\varphi-\mbox{Id})^p(v)=0\}$$ $$ U^c:=\{w \in V^c /\, \exists p, (\varphi^c-\mbox{Id})^p(w)=0\}$$
$$w_{jk} \in U^c \Leftrightarrow (\varphi^c-\mbox{Id})^p(w_{jk})=0 \Leftrightarrow (\varphi-\mbox{Id})^p(v_j)+i(\varphi-\mbox{Id})^p(v_k)=0$$
$$\Leftrightarrow \left\{ \begin{array}{l}
(\varphi-\mbox{Id})^p(v_j)=0\\
(\varphi-\mbox{Id})^p(v_k)=0.
\end{array} \right.\Leftrightarrow \left\{ \begin{array}{l}
v_j \in U\\
v_k \in U
\end{array} \right.$$
$$\varphi_s^c(w_{jk})=w_{jk}\Leftrightarrow \varphi_s(v_j)+i\varphi_s(v_k)=v_j+iv_k\Leftrightarrow \left\{ \begin{array}{l}
\varphi_s(v_j)=v_j\\
\varphi_s(v_k)=v_k
\end{array} \right.$$
Then $w \in U^c \Leftrightarrow \varphi_s^c(w)=w$ implies $v \in U \Leftrightarrow\varphi_s(v)=v$.

\smallskip 

\item \textit{$\varphi^c$ has a canonic form:}\\
let $\mbox{ad}_{X_{n+1}}$ be in Jordan form. Then we can consider $\varphi^c$ on $\bigwedge^k\mathbb{C}^n$ for every $k$ to be associated to a matrix made of blocks
$$\left( \begin{array}{ccc}
e^{\lambda t} &  &  *  \\
 & \ddots &   \\
0 &  & e^{\lambda t}
\end{array} \right)$$
Let $\alpha$ be a generator of $\bigwedge^k\mathbb{C}^n$ such that the coefficients of $\varphi^c(\alpha)$ belong to this block, then $\varphi^c(\alpha)=e^{\lambda t}\alpha+\beta$, where $\beta$ is combination of elements belonging to this same block, (the * part).

Now we decompose $\varphi^c$ in the unipotent and semisimple part:

\medskip 

$\varphi^c=\varphi_u\cdot\varphi_s$ where $\varphi_u$ is made of blocks $\left( \begin{array}{ccc}
1 &  &  \star  \\
 & \ddots &   \\
0 &  & 1
\end{array} \right)$ and the semisimple part $\varphi_s$ consists of diagonal blocks of the form $e^{\lambda t}\mbox{Id}$. 

\smallskip

This means that $\varphi_u(\alpha)=\alpha+\beta'$, where $\beta'$ is combination of elements belonging to this same block, (the $\star $ part), $\varphi_s(\alpha)=e^{\lambda t}\alpha$ and $\beta=e^{\lambda t}\beta'$. Then  $\varphi^c(\alpha)=e^{\lambda t}\varphi_u(\alpha)$ and in general $\varphi^c=e^{\lambda t}\varphi_u$ for some $\lambda$.

\smallskip 

\item $\forall p \quad (\varphi^c)^p(\alpha)=e^{p\lambda t}\varphi_u^p(\alpha) :$

we use induction: for $p=2$ we have $$(\varphi^c)^2(\alpha)=\varphi(e^{\lambda t}\varphi_u(\alpha))=e^{\lambda t}\varphi(\varphi_u(\alpha)),$$ but $\beta'$ is combination of elements belonging to the same block, then $$=e^{\lambda t}(e^{\lambda t}\varphi_u(\varphi_u(\alpha)))=e^{2\lambda t}\varphi_u^2(\alpha).$$
If now suppose that the property holds for $p-1$ we can prove it for $p$ in a similar way.

\smallskip

\item $(\varphi^c-\mbox{Id})^k(\alpha)=0 \;\Leftrightarrow\;\varphi_s^c(\alpha)=\alpha:$

let $j$ be the dimension of the block to which $\alpha$ belong, then\\ $(\varphi_u-\mbox{Id})^j(\alpha)=0$. 
\begin{itemize}
\item[\textquotedblleft$\Rightarrow$\textquotedblright:] Let $h\geq \max \{j,k\}$, then $$0=(\varphi^c-\mbox{Id})^h(\alpha)=(e^{\lambda t}\varphi_u-\mbox{Id})^h(\alpha)=$$ $$=[e^{\lambda t}(\varphi_u-\mbox{Id})+(e^{\lambda t}-\mbox{Id})]^h(\alpha)=$$
$$=\sum_{p=0}^{h}\binom{h}{p}(e^{\lambda t}-\mbox{Id})^{h-p}(\alpha)\cdot e^{p\lambda t}(\varphi_u-\mbox{Id})^p(\alpha)=$$
$$=\sum_{p=0}^{h-1}\binom{h}{p}(e^{\lambda t}-\mbox{Id})^{h-p}(\alpha)\cdot e^{p\lambda t}(\varphi_u-\mbox{Id})^p(\alpha)+e^{h\lambda t}(\varphi_u-\mbox{Id})^h(\alpha)$$ but $h\geq j$, then the last summand is $0$ and $$=(e^{\lambda t}-\mbox{Id})(\alpha)\left( \sum_{p=0}^{h-1}\binom{h}{p}(e^{\lambda t}-\mbox{Id})^{h-p-1}(\alpha)\cdot e^{p\lambda t}(\varphi_u-\mbox{Id})^p(\alpha) \right) $$ then $(e^{\lambda t}-\mbox{Id})(\alpha)=0$, i.e. $\varphi_s(\alpha)=e^{\lambda t}\alpha=\alpha$.
\item[\textquotedblleft$\Leftarrow$\textquotedblright:] $\varphi_s(\alpha)=\alpha\; \Leftrightarrow\; e^{\lambda t}=1\;\Leftrightarrow\;\varphi^c(\alpha)=\varphi_u(\alpha)$, then $$(\varphi^c-\mbox{Id})^j(\alpha)=(\varphi_u-\mbox{Id})^j(\alpha)=0.$$
\end{itemize}
\end{enumerate}
\end{proof}

\medskip
This proposition gives also a geometrical meaning to the complexification of $U$, $U^c$: let $V_{\lambda}$ be the subspace of $\mathbb{C}^{n}$ generated by the generators $\alpha$ of $\mathbb{C}^{n}$ such that the coefficients of $\varphi^c(\alpha)$ belong a block of eigenvalue $\lambda$,\\i.e. $\varphi^c(\alpha)=e^{\lambda t}\varphi_u(\alpha)$, then $$U^c=\bigoplus_{\{i_1,\cdots,i_k\}\subseteq \{1,\cdots,n\} ,\,\sum_{p}\lambda_{i_p}t=0}V_{\lambda_{i_1}}\bigwedge\cdots \bigwedge V_{\lambda_{i_k}}$$

\bigskip \bigskip 

Now we prove a property of $U$ that we use in the next sections to study formality of $S$.

\begin{prop}\label{prodotti}
For every $ \alpha, \beta \in H^*(\mathbb{R}^n)$, where $\mathbb{R}^n$ is the n-dimensional abelian Lie algebra, if $\alpha$ and $\beta \in U$ then also  $\alpha\wedge\beta \in U$.
\end{prop}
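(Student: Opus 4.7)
The quickest route is via the original definition of $U$: $\alpha \in U$ iff $N^k(\alpha) = 0$ for some $k$, where $N := \varphi - \mbox{Id}$. Since $\varphi$ is the algebra automorphism of the exterior algebra of $(\mathbb{R}^n)^*$ induced by a linear automorphism, one has $\varphi(\alpha \wedge \beta) = \varphi(\alpha) \wedge \varphi(\beta)$. Subtracting $\alpha \wedge \beta$ on both sides and rearranging gives the twisted Leibniz rule
$$N(\alpha \wedge \beta) = N(\alpha) \wedge \beta + \varphi(\alpha) \wedge N(\beta).$$

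I would then iterate this identity to obtain a binomial-type formula
$$N^k(\alpha \wedge \beta) = \sum_{i=0}^{k}\binom{k}{i}\,\varphi^{k-i}(N^i(\alpha)) \wedge N^{k-i}(\beta),$$
the inductive step using that $N$ commutes with $\varphi$ (both are polynomials in $\varphi$). Assuming $N^p(\alpha) = 0$ and $N^q(\beta) = 0$, set $k = p+q-1$: in each summand either $i \geq p$, which kills $N^i(\alpha)$, or $k - i \geq q$, which kills $N^{k-i}(\beta)$. Hence $N^{p+q-1}(\alpha \wedge \beta) = 0$, so $\alpha \wedge \beta \in U$.

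The only real obstacle is verifying the binomial identity above, which is a routine induction once the twisted Leibniz rule is in hand; no cohomological or geometric input beyond the multiplicativity of $\varphi$ is needed. A more conceptual alternative, staying closer to the preceding Proposition \ref{kasuya}, would be to prove directly that the semisimple part $\varphi_s$ is multiplicative: using the generalized-eigenspace decomposition from the proof of Proposition \ref{kasuya}, $\alpha \in V_\lambda$ and $\beta \in V_\mu$ force $\alpha \wedge \beta$ to lie in the generalized eigenspace for eigenvalue $e^{(\lambda+\mu)t}$, so $\varphi_s(\alpha \wedge \beta) = e^{(\lambda+\mu)t}(\alpha \wedge \beta) = \varphi_s(\alpha) \wedge \varphi_s(\beta)$; specialising to $\varphi_s(\alpha) = \alpha$ and $\varphi_s(\beta) = \beta$ and invoking the criterion of Proposition \ref{kasuya} then closes the argument.
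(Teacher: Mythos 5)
Your argument is correct, but your primary route is genuinely different from the paper's. The paper proves this in two lines by invoking Proposition \ref{kasuya}: membership in $U$ is equivalent to being fixed by the semisimple part $\varphi_s$, and $\varphi_s(\alpha\wedge\beta)=\varphi_s(\alpha)\wedge\varphi_s(\beta)=\alpha\wedge\beta$. You instead work straight from the original definition of $U$ via nilpotency of $N=\varphi-\mbox{Id}$, derive the twisted Leibniz rule, and iterate it to the binomial identity; the identity is correct (the inductive step produces the coefficient $\binom{k}{i-1}+\binom{k}{i}=\binom{k+1}{i}$ as needed, using that $N$ commutes with $\varphi$), and taking $k=p+q-1$ kills every summand. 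What your version buys is self-containedness: it bypasses the Jordan decomposition entirely, needing only that $\varphi$ is an algebra automorphism of $\bigwedge(\mathbb{R}^n)^*$. The paper's proof is shorter but silently uses that the semisimple part of $\varphi$ is itself multiplicative on the exterior algebra --- true, since $(\bigwedge\varphi)_s=\bigwedge(\varphi_s)$ by uniqueness of the Jordan decomposition, but a point worth a sentence; your ``conceptual alternative'' at the end, which justifies multiplicativity of $\varphi_s$ via the eigenspace decomposition $V_\lambda\wedge V_\mu\subseteq V_{\lambda+\mu}$, is essentially the paper's argument with that gap filled. Either route is acceptable; the Leibniz--binomial one is the more elementary and would also work in settings where one has not yet established Proposition \ref{kasuya}.
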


\begin{proof}
Due to Proposition \ref{kasuya} this proof is very simple: $\alpha$ and $\beta \in U$ is equivalent to $\varphi_s(\alpha)=\alpha$ and $\varphi_s(\beta)=\beta$, then $$\varphi_s(\alpha\wedge\beta)=\varphi_s(\alpha)\wedge\varphi_s(\beta)=\alpha\wedge\beta.$$
\end{proof}

\medskip 

\begin{remark}\label{zero}
$U$ is a submodule of $H^*(\mathbb{R}^n)$, then also in $U$ the zero class is represented only by the zero form in $\bigwedge^*(\mathbb{R}^n)$.
\end{remark}

\bigskip

\section{Formality}\label{formal}

We begin stating two equivalent definitions of $s$-formality and formality, \cite{FM1}, \cite{FM2}, \cite{FOT}:

\begin{definition}\label{def1}
A cdga $(\bigwedge V,d)$ is $s$-formal if there is a cdga homomorphism
$\psi: \bigwedge V^{\leq s}\to H^*(\bigwedge V)$, such that the map $\psi^\ast: H^*(\bigwedge V^{\leq s})\to H^*(\bigwedge V)$ induced on cohomology is equal to the map $i^\ast: H^*(\bigwedge V^{\leq s}) \to H^*(\bigwedge V)$ induced by the inclusion $i: \bigwedge V^{\leq s} \to \bigwedge V$.
\end{definition}

\begin{definition}
A minimal cdga $(\bigwedge V,d)$ is $s$-formal if for every $i\leq s \\ V^i=C^i\oplus N^i$ such that
\begin{itemize}
\item $d(C^i)=0$
\item $d$ is injective on $N^i$
\item $\forall n \in I_s:= \bigwedge V^{\leq s}\cdot N^{\leq s}$ such that $dn=0$, then $n$ is exact in $\bigwedge V$.
\end{itemize}
\end{definition}

We say that $(\bigwedge V,d)$ is \textit{formal} if it is $s$-formal $\forall s\geq 0$, in particular this means

\begin{definition}
A cdga $(\bigwedge V,d)$ is formal if there exists a cgda homomorphism $\psi: \bigwedge V\rightarrow H^*(\bigwedge V)$ that induces the identity in cohomology.
\end{definition}

\begin{definition}\label{def4}
A minimal cdga $(\bigwedge V,d)$ is formal if $ V=C\oplus N$ such that
\begin{itemize}
\item $d(C)=0$
\item $d$ is injective on $N$
\item $\forall n \in I:= \bigwedge V\cdot N$ such that $dn=0$, then $n$ is exact in $\bigwedge V$.
\end{itemize}
\end{definition}

\medskip 

We denote by $(\mathcal{M}_U,d)$ the minimal cdga $(\bigwedge Y, d_Y)$ and by $(\mathcal{M}_S,D)$ the minimal model $(\bigwedge(X\oplus Y),D)$ of $S$.
\\Proposition \ref{prodotti} implies the following

\begin{prop}\label{Uformal}
$(\mathcal{M}_U,d)$ is always formal.
\end{prop}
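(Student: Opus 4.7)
My plan is to deduce formality of $(\mathcal{M}_U,d)$ from the observation that, thanks to Proposition \ref{prodotti}, the submodule $U$ itself—equipped with the zero differential it inherits from $H^*(\mathbb{R}^n)$—carries the structure of a cdga, and $(\mathcal{M}_U,d)$ is nothing but its minimal model. Since any cdga with vanishing differential is tautologically formal, formality must then be inherited by any of its minimal models.

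In detail, Proposition \ref{prodotti} is precisely the statement that $U$ is closed under the wedge product of $H^*(\mathbb{R}^n)$. Because the Chevalley--Eilenberg complex of the abelian Lie algebra $\mathbb{R}^n$ has zero differential, $(U,0)$ is a sub-cdga of $(H^*(\mathbb{R}^n),0)$ and satisfies $H^*(U,0)=U$ tautologically. The identity map $(U,0)\to H^*(U,0)$ is therefore a cdga quasi-isomorphism inducing the identity on cohomology, so $(U,0)$ is formal by the very definition preceding Definition \ref{def4}.

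Now by Theorem \ref{fibration}, $(\mathcal{M}_U,d)=(\bigwedge Y,d_Y)$ is minimal and satisfies $H^*(\mathcal{M}_U,d)\cong U$. Invoking the uniqueness (up to isomorphism) of the minimal model of a given cdga, $(\mathcal{M}_U,d)$ is also the minimal model of $(U,0)$; in particular there exists a cdga quasi-isomorphism $\psi\colon \mathcal{M}_U\to (U,0)$. Identifying $U$ with $H^*(\mathcal{M}_U)$ via $\psi^*$, the very map $\psi$ becomes a cdga morphism $\mathcal{M}_U\to H^*(\mathcal{M}_U)$ inducing the identity on cohomology, which is exactly the formality criterion.

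I do not expect a real obstacle here. The only point requiring care is the existence of $\psi$: this follows from the standard uniqueness of the minimal model, but can also be produced directly by induction along an ordered basis of $Y$. Indeed, at each generator $y$ one must satisfy $d_U\psi(y)=\psi(dy)$; since $d_U=0$ and $\psi(dy)$ represents the cohomology class $[dy]=0$ in $U$, this equation is satisfied for free and $\psi(y)$ can always be chosen to represent the required class—so no obstruction ever appears.
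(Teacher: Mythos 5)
Your reduction to $(U,0)$ is fine as far as it goes: by Proposition \ref{prodotti} the space $U$ is a subalgebra of $H^*(\mathbb{R}^n)$, so $(U,0)$ is a cdga with zero differential and is trivially formal. The gap is in transferring formality back to $(\mathcal{M}_U,d)$. Uniqueness of minimal models says that two minimal cdgas \emph{each equipped with a quasi-isomorphism to the same cdga $\mathcal{A}$} are isomorphic; it does not say that a minimal cdga whose cohomology algebra is abstractly isomorphic to $H^*(\mathcal{A})$ admits a quasi-isomorphism to $\mathcal{A}$. For a minimal algebra $\mathcal{M}$, the assertion that $\mathcal{M}$ is a minimal model of $(H^*(\mathcal{M}),0)$ is \emph{equivalent} to the formality of $\mathcal{M}$, so your appeal to uniqueness assumes exactly what is to be proved. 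That the principle fails in general is shown by any non-formal minimal algebra, e.g.\ $(\bigwedge(x_1,x_2,y),\ dy=x_1x_2)$ with all generators in degree $1$ (the model of the Heisenberg nilmanifold): it is minimal, its cohomology algebra $H$ has its own minimal model $\mathcal{M}'\to (H,0)$ with $H^*(\mathcal{M}')\cong H$, and yet $\mathcal{M}'$ is not isomorphic to the original algebra, since one is formal and the other is not.

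The inductive construction you offer as a fallback does not close this gap. When you reach a generator $y$, the value $\psi(dy)$ is not free: it is determined by the already-chosen restriction of $\psi$ to $\bigwedge Y^{<y}$, and since the target has zero differential it equals the image under the induced map of the class of $dy$ in $H^*(\bigwedge Y^{<y})$ --- not of its class in $H^*(\bigwedge Y)$, which is of course zero because $y$ itself is a primitive. The class of $dy$ in $H^*(\bigwedge Y^{<y})$ is typically nonzero (that is precisely why the generator $y$ was introduced), so the equation $\psi(dy)=0$ is a genuine constraint; and even when it can be satisfied at every stage, one must still arrange that the resulting $\psi$ induces the identity on cohomology, which is a further nontrivial condition. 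Any correct proof must therefore use something specific to $U$. This is what the paper does: it verifies the criterion of Definition \ref{def4} directly, using Proposition \ref{prodotti} (closure of $U$ under wedge) together with the fact that $U$ sits inside $H^*(\mathbb{R}^n)$, where the zero class is represented only by the zero form (Remark \ref{zero}), to control the closed elements of the ideal $I=\bigwedge V\cdot N$.
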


\begin{proof}
Consider $U$ as a vector space and define $A$ as the subspace of $U$ spanned by generators of $U$ that are wedge of generators of lower degree, and $B$ as the subspace of $U$ spanned by generators of $U$ that are  not wedge of generators of lower degree. Then $U=A\oplus B$.

Using the notation of Definition \ref{def4} we have that if $Y=C\oplus N$,then $C\cong B$ as vector spaces. Then by Proposition \ref{prodotti}, the cohomology of $\mathcal{M}_U$ is given by the elements of $B$. This means that for every $b \in B$ exist $c_b \in C$ such that $[c_b]\cong b$ by the isomorphism $\rho^*$, then $dc_b=0$ and $c_b$ is not exact. Moreover, every $n \in N$ is not closed.

Suppose that there exists a closed element in $\mathcal{M}_U$ which is not a generator and that it lies in $I$. This means that it is a product of two elements and at least one of them is not closed. By Proposition \ref{prodotti} the cohomology of $\mathcal{M}_U$ is given only by the elements of $B$, so this element must be also exact. Otherwise $H^*(\mathcal{M}_U)\ncong U$.

Then by Definition \ref{def4} $(\mathcal{M}_U,d)$ is formal.
\end{proof}

\bigskip 

Now consider the minimal model $(\mathcal{M}_S,D)$ of the solvmanifold $S$. \\By definition $DA=0$ and \begin{equation}\label{D1}\forall x \in Y \;
Dx =\langle \begin{array}{l}
dx \quad \mbox{or}\\
dx +yA \quad \mbox{with}\; y \in \Lambda Y^{< x}
\end{array}.\end{equation}

A generic element in $(\mathcal{M}_S,D)$ has form $s=x+yA$ with $x,y \in \mathcal{M}_U$, then $s$ is closed if and only if $Dx+D(y)A=0$. \\Suppose $Dx=dx+x'A$ and $Dy=dy+y'A$ ($x'$ and $y'$ can be also zero and we will use this notation from now on), then 
\begin{equation}\label{D2} Ds=dx+(x'+dy)A=0 \;\, \mbox{if and only if} \; \left\{ \begin{array}{l}
dx=0\\
 x'+dy=0.
\end{array} \right.\end{equation}
If $s$ is also exact, i.e. there exists $r=p+qA$ with $p$ and $q \in \mathcal{M}_U$ such that $Dr=s$, then $\left\{ \begin{array}{l}
x=dp\\
y=p'+dq
\end{array} \right.$

\medskip

\begin{definition}
A cdga $\mathcal{A}$ is of $k$-finite type if $\forall i\leq k \; \mathcal{A}^i$ is a finite dimensional vector space.
\end{definition}

\begin{remark}
Obviously $\mathcal{M}_S$ is of $k$-finite type if and only if $\mathcal{M}_U$ is of $k$-finite type.
\end{remark}

We can now prove the main result:

\begin{theorem}\label{formality}
If $\mathcal{M}_S$ is of $k$-finite type, then $S$ is $k$-formal if and only if $\ker D_i|_{\mathcal{M}_U}=\ker d_i \; \forall i\leq k$, where with $d_i$ we mean $d|_{\mathcal{M}_U^i}$.
\end{theorem}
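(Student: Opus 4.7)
The plan is to verify both directions via Definition \ref{def4} of ($k$-)formality, leveraging the formality splitting of $\mathcal{M}_U$ given by Proposition \ref{Uformal}. Let $Y = C \oplus N$ denote that splitting, so $dC = 0$, $d|_N$ is injective, and every $d$-closed element of $I_U := \bigwedge Y \cdot N$ is $d$-exact. With the notation $Dx = dx + x'A$ for $x \in \mathcal{M}_U$ from (\ref{D1})--(\ref{D2}), the kernel hypothesis is exactly the statement that $x' = 0$ whenever $dx = 0$ (for $x \in \mathcal{M}_U^{\leq k}$).

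For $(\Leftarrow)$ I propose the splitting $\mathcal{C}^1 := \mathbb{R}A \oplus C^1$, $\mathcal{C}^i := C^i$ for $2 \leq i \leq k$, and $\mathcal{N}^i := N^i$. Conditions (1) and (2) of Definition \ref{def4} follow at once: $DA = 0$, the hypothesis promotes $dc = 0$ to $Dc = 0$ for every $c \in C^{\leq k}$, and $Dn = 0$ forces $dn = 0$ by the $A$-free coefficient, hence $n = 0$ by injectivity of $d$ on $N$. For condition (3), take $s = p + qA \in I_S^{\leq k}$ with $Ds = 0$: (\ref{D2}) gives $dp = 0$ and $dq + p' = 0$, with $p, q \in I_U^{\leq k}$. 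Formality of $\mathcal{M}_U$ supplies a primitive $u \in I_U$ with $p = du$, and the identity $D^2 = 0$ then yields $d(q - u') = 0$; the desired relation $s = D(u + vA)$ reduces to exhibiting $v$ with $dv = q - u'$.

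For $(\Rightarrow)$ I argue the contrapositive. If some $x \in \mathcal{M}_U^i$ ($i \leq k$) has $dx = 0$ and $x' \neq 0$, then any splitting witnessing $k$-formality is forced (by $D(\mathcal{C}) = 0$ together with injectivity of $D$ on $\mathcal{N}$) to satisfy $\mathcal{C}^i \cap Y^i = \{y \in Y^i : dy = 0, y' = 0\}$, which is a proper subspace of $C^i$ under our assumption. A generator $y \in C^i$ with $y' \neq 0$ therefore lies in $\mathcal{N}^i$, and the $D$-closed element $yA \in I_S^{\leq i+1}$ becomes the natural candidate for a counterexample to condition (3); the obstruction is detected by the Massey-type product $\langle [y'], [A], [A] \rangle \ni [yA]$, and by the fact that $[A]$ is a zero-divisor in $H^*(\mathcal{M}_S)$ (since $[A]^2 = 0$), so that $[yA]$ cannot be realized as $\psi(y)\cdot\psi(A)$ for any cdga homomorphism $\psi$ in the sense of Definition \ref{def1}.

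The main obstacle I anticipate is the delicate step in $(\Leftarrow)$: showing that the $d$-closed element $q - u'$ is in fact $d$-exact. Closedness follows from $D^2 = 0$, but exactness requires controlling the $\bigwedge C$-component of $q - u'$ along the decomposition $\mathcal{M}_U = \bigwedge C \oplus I_U$; the kernel hypothesis applied to all of $\mathcal{M}_U^{\leq k}$ (not merely to generators), combined with Proposition \ref{prodotti} and a judicious choice of the primitive $u$ inside $I_U$, should eliminate the obstruction and complete the argument.
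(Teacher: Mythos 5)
Your proposal leaves genuine gaps in both directions, and in each case the missing step is exactly where the paper's proof does its real work.

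In the direction ``$k$-formal $\Rightarrow$ kernel condition'' you correctly isolate the candidate element: a $d$-closed $x\in\mathcal{M}_U^i$ with $x'\neq 0$ lies in $\mathcal{N}^i$, so $xA\in I_S$ is $D$-closed. But you stop at calling $xA$ ``the natural candidate for a counterexample to condition (3)'': nothing in your argument excludes that $xA$ is $D$-exact, in which case no contradiction is obtained. The Massey-product remark does not close this, because $\langle [x'],[A],[A]\rangle\ni[xA]$ obstructs formality only if one proves it is non-trivial modulo its indeterminacy, which is the same unproved non-exactness claim; and the assertion that $[xA]$ ``cannot be realized as $\psi(x)\cdot\psi(A)$'' is not argued. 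The paper's proof handles precisely the exact case by iteration: if $Ax=Dx^1$, then $x^1$ is again a $d$-closed generator of degree $i$ with $(x^1)'\neq 0$, so $Ax^1\in I_k$ is closed, and one repeats; the $k$-finite-type hypothesis forces this strictly increasing chain of degree-$i$ generators to terminate in a closed, non-exact element of $I_k$. Your argument never invokes finite type, which is the decisive hypothesis of the theorem --- that alone signals something essential is missing.

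In the converse direction you flag the gap yourself: after writing $p=du$ you must produce $v$ with $dv=q-u'$, i.e.\ show that the $d$-closed element $q-u'$ is $d$-exact. This does not follow from formality of $\mathcal{M}_U$ (there is no reason for $q-u'$ to lie in $I_U$), nor from the kernel hypothesis (which only upgrades $d$-closedness to $D$-closedness, not to exactness), so condition (3) of Definition \ref{def4} is not verified. The paper avoids this computation by a different mechanism: using the kernel hypothesis it shows that a closed non-exact $s_\alpha=x_\alpha+y_\alpha A$ has $[x_\alpha]\neq 0$ in $H^*(\mathcal{M}_U)\cong U$, then applies Proposition \ref{prodotti} (together with Remark \ref{zero}) to conclude $[x_\alpha\cdot x_\beta]\neq 0$ and hence that $s_\alpha\cdot s_\beta$ is not exact; $k$-formality then follows by the same argument template as in Proposition \ref{Uformal}. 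If you wish to keep your direct verification of condition (3), you would have to supply a proof of the exactness of $q-u'$, and it is not clear this can be done without reverting to an argument of the paper's type.
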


\begin{proof}
Suppose that for some $i\leq k \; \ker D_i|_{\mathcal{M}_U}\subsetneq\ker d_i $, then there exists $x \in \mathcal{M}_U^i$ such that $dx=0$, but $Dx\neq 0$. This means for (\ref{D1}) that $Dx=yA$ with $0\neq y \in \mathcal{M}_U^{<x}$, then $D(Ax)=0$ and $x \in N^i$, so $Ax \in I_k$ is closed.

If it is not exact, then $\mathcal{M}_S$ is not $k$-formal, otherwise  there exists an element of degree $i \; x^1\in \mathcal{M}_S^{>x}$ such that $Dx^1=Ax$, then $x^1 \in N^i$ and again $Ax^1 \in I_k$ is closed. If it is not exact $\mathcal{M}_S$ is not $k$-formal, otherwise there exists another element of degree $i \; x^2\in \mathcal{M}_S^{>x^1>x}$ such that $Dx^2=Ax^1$ and so on, but $\mathcal{M}_S$ is of $k$-finite type, then exists $p \in \mathbb{N}$ such that $D(Ax^p)=0$ not exact and so $\mathcal{M}_S$ is not $k$-formal.

\medskip 

Now suppose that $\ker D_i|_{\mathcal{M}_U}=\ker d_i \; \forall i\leq k$. Recall that in Proposition \ref{Uformal} we used only Proposition \ref{prodotti} to prove formality of $\mathcal{M}_U$. Then if we prove an analogous property for $H^{\leq k}(S)$ we can use again Definition \ref{def4} and obtain $k$-formality for $S$.
\\Let $0\neq \alpha=[s_{\alpha}]$ and $0\neq \beta=[s_{\beta}]$ be two elements of $H^{\leq k}(S)$, then $Ds_{\alpha}=Ds_{\beta}=0$ and they are not exact. This means that if $s_{\alpha}=x_{\alpha}+y_{\alpha}A$ and $s_{\beta}=x_{\beta}+y_{\beta}A$, then $dx_{\alpha}=dy_{\alpha}=dx_{\beta}=dy_{\beta}=0$ and do not  exist $r_{\alpha}=p_{\alpha}+q_{\alpha}A$ and $r_{\beta}=p_{\beta}+q_{\beta}A$ such that $Dr_{\alpha}=s_{\alpha}$ and $Dr_{\beta}=s_{\beta}$. This implies that $dp_{\alpha}\neq x_{\alpha}$ and $dp_{\beta}\neq x_{\beta}$, then $[x_{\alpha}]\neq 0$ and $[x_{\beta}]\neq 0$ in $H^{\leq k}(\mathcal{M}_U)\cong U^{\leq k}$.

If we prove that also $s_{\alpha}\cdot s_{\beta}$ is not exact, then $0 \neq \alpha\cdot\beta \in H^*(S)$ and we have $k$-formality. 

Suppose by contradiction that $s_{\alpha}\cdot s_{\beta}=x_{\alpha}\cdot x_{\beta}+(y_{\alpha}\cdot x_{\beta}+x_{\alpha}\cdot y_{\beta})A$ is exact, then there exists $r=p+qA$ such that $Dr=s_{\alpha}\cdot s_{\beta}$, but in particular this implies that $dp=x_{\alpha}\cdot x_{\beta}$, then $[x_{\alpha}\cdot x_{\beta}]=0$ in $H^*(\mathcal{M}_U)\cong U$ that is impossible by Proposition \ref{prodotti}, then also $s_{\alpha}\cdot s_{\beta}$ is not exact.
\end{proof}

The Main Theorem is obviously a direct consequence of this theorem.

\bigskip

\section{Symplectic structures}\label{symplectic}

Suppose that $S=\mathbb{R}\ltimes\mathbb{R}^{2n-1}$ has dimension $2n$. Recall that a symplectic form on $S$ is $\omega \in \bigwedge^2 S$ such that $d\omega=0$ and $\omega^n \neq 0$. We denote with $\{\alpha^1, \cdots , \alpha^{2n-1} \}$ the basis of $\bigwedge^1 \mathbb{R}^{2n-1}$ and with $\{ \alpha^{2n} \}$ the basis of $\bigwedge^1 \mathbb{R}$

\smallskip 

\begin{definition}
If $M$ is a $(2n-1)$-dimensional manifold a co-symplectic structure on $M$ is a couple $(F, \eta)$ where $F$ is a $2$-form, $\eta$ is a $1$-form on $M$, both are closed and $F^{n-1}\wedge \eta \neq 0$.
\end{definition}

In particular we call a \textit{co-symplectic structure on $U$} a co-symplectic structure $(F, \eta)$ on $\mathbb{R}^{2n-1}$ such that $[F],[\eta] \in U$. Observe that every form on $\mathbb{R}^{2n-1}$ is closed, so the only necessary condition to get this structure is the non-degeneracy.

\medskip 

Let $(F, \eta)$ be a co-symplectic structure on $U$. This means that $$F:=\sum_{1\leq i < j \leq 2n-1} a_{ij}\alpha^{ij}, \quad \eta:=\sum_{1\leq k \leq 2n-1} b_k\alpha^{k}$$
$[F],[\eta] \in U$ and $F^{n-1}\wedge \eta \neq 0$.

\medskip 

Now consider the minimal model $\mathcal{M}_S$ of $S$. If $A$ is the generator we add to $U$ from $\bigwedge^* \mathbb{R}$, then with the notation of Theorem \ref{fibration} we have $\sigma(A)=\alpha^{2n}$ and then also 
$$\begin{array}{rcl}
\tau: \mathcal{M}_S & \rightarrow & \Lambda^* S \\
A & \mapsto &\alpha^{2n} \\
\mathcal{M}_U & \mapsto & \rho(\mathcal{M}_U)\subset \Lambda^* \mathbb{R}^{2n-1}
\end{array}$$

$[F],[\eta] \in U$ then there exist $x \in \mathcal{M}_U^2$ and $y \in \mathcal{M}_U^1$ such that $\rho^*([x])=[F]$ and $\rho^*([y])=[\eta]$. \\But in $U \subset H^*(\mathbb{R}^{2n-1})$ we do not have exact forms. So $\rho(x)=F$ and $\rho(y)=\eta$. 

Therefore $dx=dy=0$ and if $s:=x+yA \in \mathcal{M}_S^2$, $Ds=Dx=x'A$.
$$s^n=(x+yA)^n=\sum_{p=0}^{n}\binom{n}{p}x^{n-p}y^pA^p=x^n+nx^{n-1}yA$$ because both $y$ and $A$ have odd degree and then their powers are $0$. But
$$\rho(x^{n-1}y)=(\rho(x))^{n-1}\rho(y)=F^{n-1}\wedge \eta \neq 0,$$ then $x^{n-1}y \neq 0$ in $\mathcal{M}_U$ and so $x^{n-1}yA \neq 0$ in $\mathcal{M}_S$.

\smallskip 

$x^n \in \mathcal{M}_U$, then $x^n \neq -nx^{n-1}yA \in \mathcal{M}_S$, then $s^n\neq 0$ in $\mathcal{M}_S$.

In particular $\omega:=\tau(s)=\tau(x)+\tau(y)\tau(A)=F+\eta\wedge \alpha^{2n}$ is a $2$-form on $S$ and $$\omega^n=\tau(s^n)=(\tau(x))^n+n(\tau(x))^{n-1}\tau(y)\tau(A)=F^n+nF^{n-1}\wedge\eta\wedge\alpha^{2n}.$$

$F^n=0$ because it is in $\bigwedge(\alpha^1, \cdots,\alpha^{2n-1})$ and $F^{n-1}\wedge \eta \neq 0$ by hypothesis, then also $\omega^n=nF^{n-1}\wedge\eta\wedge\alpha^{2n} \neq 0$.
\\Since $d\omega=\tau(Ds)$ by Definition \ref{morp}, if $x'=0, \; \omega$ is closed and we have a symplectic structure on $S$.

\medskip 

We have then proved the following proposition:

\begin{prop}
If $D_2|_{\mathcal{M}_U}=d_2$ and there exists a co-symplectic structure on $U$, then there exists a symplectic structure on $S$.
\end{prop}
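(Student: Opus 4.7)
The plan is to lift the co-symplectic data $(F,\eta)$ from $U$ to a $D$-closed element of $\mathcal{M}_S^{2}$ and then transport it back to $S$ via $\tau$. First, using that $\rho^{*}\colon H^{*}(\mathcal{M}_U)\to U$ is an isomorphism, and that the Chevalley--Eilenberg differential on $\bigwedge^{*}\mathbb{R}^{2n-1}$ vanishes so every cohomology class is represented uniquely by a form, I would produce cocycles $x\in\mathcal{M}_U^{2}$ and $y\in\mathcal{M}_U^{1}$ with $\rho(x)=F$ and $\rho(y)=\eta$ on the nose, and set $s:=x+yA\in\mathcal{M}_S^{2}$.

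Next I would verify that $s$ is $D$-closed. The structure (\ref{D1}) of the minimal model of a fibration gives $Dx=dx+x'A$ for some $x'\in\mathcal{M}_U$, and a graded-derivation computation using $DA=0$ together with $A^{2}=0$ (since $A$ has odd degree) yields
\[
Ds = dx + (x'+dy)A.
\]
The closedness of $x$ and $y$ in $\mathcal{M}_U$ kills $dx$ and $dy$, and the hypothesis $D_{2}|_{\mathcal{M}_U}=d_{2}$ forces $x'=0$. Hence $Ds=0$.

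The final step is to show that $\omega:=\tau(s)=F+\eta\wedge\alpha^{2n}$ is a non-degenerate closed $2$-form on $S$. Since $y$ and $A$ are both of odd degree, $y^{2}=A^{2}=0$, so the binomial expansion of $(x+yA)^{n}$ collapses to $s^{n}=x^{n}+n\,x^{n-1}yA$. Applying $\tau$ gives $\omega^{n}=F^{n}+n\,F^{n-1}\wedge\eta\wedge\alpha^{2n}$; since $\dim\mathbb{R}^{2n-1}=2n-1<2n$ we have $F^{n}=0$, while the co-symplectic hypothesis gives $F^{n-1}\wedge\eta\neq 0$, whence $\omega^{n}\neq 0$. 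Closedness follows immediately from $d\omega=\tau(Ds)=0$.

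The main point on which the whole argument hinges is the hypothesis $D_{2}|_{\mathcal{M}_U}=d_{2}$: without it the natural lift $x$ of $F$ could carry a nontrivial correction $x'A$ in its $D$-differential, and $\omega$ would fail to be closed. Everything else is a direct computation inside the minimal model, simplified by the fact that the base contributes only the single odd-degree closed generator $A$.
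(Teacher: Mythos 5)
Your proposal is correct and follows essentially the same route as the paper: lift $F$ and $\eta$ to $d$-cocycles $x,y$ in $\mathcal{M}_U$ (using that the differential on $\bigwedge^*\mathbb{R}^{2n-1}$ vanishes so $\rho$ hits $F$ and $\eta$ on the nose), form $s=x+yA$, use $D_2|_{\mathcal{M}_U}=d_2$ to get $Ds=0$, and check non-degeneracy of $\omega=\tau(s)$ via $\omega^n=nF^{n-1}\wedge\eta\wedge\alpha^{2n}\neq 0$. The only difference is cosmetic: the paper also verifies $s^n\neq 0$ inside $\mathcal{M}_S$ before applying $\tau$, a step your argument correctly bypasses by computing $\omega^n$ directly on $S$.
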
 

\bigskip

\section{Examples}

We conclude giving two examples of computation:

\subsection{An example in dimension 6}
Consider the almost abelian solvmani-\\fold $S_6$ defined by the action of
$$\mbox{ad}_{X_6}=\left( \begin{array}{ccccc}
0 & 1 & 0 & 0 & 0 \\
0 & 0 & 1 & 0 & 0 \\
0 & 0 & 0 & 0 & 0 \\
0 & 0 & 0 & 0 & 1 \\
0 & 0 & 0 & -1 & 0
\end{array} \right)$$
with lattice generated by $t=2\pi$.

The Lie algebra associated to this solvmanifold  in \cite{bock} is called $\mathfrak{g}_{6.10}^{a=0}$.

According to the method developed in \cite{CF2} and \cite{Guan}, this solvmanifold is diffeomorphic to the $6$-dimensional, almost abelian, completely solvable solvmanifold $\tilde{G}/\Gamma_{2\pi}$ with $\tilde{G}=\mathbb{R}\ltimes_{\tilde{\varphi}}\mathbb{R}^5$ and 
$$\tilde{\varphi}=\left(\begin{array}{ccccc}
0 & 1 & 0 & 0 & 0 \\
0 & 0 & 1 & 0 & 0 \\
0 & 0 & 0 & 0 & 0 \\
0 & 0 & 0 & 0 & 0 \\
0 & 0 & 0 & 0 & 0
\end{array}\right).$$

Then its cohomology groups are isomorphic to those of the Lie algebra $\tilde{\mathfrak{g}}$ given by 
$[X_2,X_6]=X_1, \; [X_3,X_6]=X_2$, \cite{noi}.

In particular we have 

$$\begin{array}{rl}
H^1(S_6)=&\langle \alpha_3,\,\alpha_4,\,\alpha_5,\,\alpha_6\rangle \\ 
H^2(S_6)=&\langle \alpha_{16},\,\alpha_{23},\,\alpha_{34},\,\alpha_{35},\,\alpha_{45},\,\alpha_{46},\,\alpha_{56}\rangle \\
H^3(S_6)=&\langle \alpha_{123},\,\alpha_{126},\,\alpha_{146},\,\alpha_{156},\,\alpha_{234},\,\alpha_{235},\,\alpha_{345},\,\alpha_{456}\rangle
\end{array}$$

Now we compute $U$: $\varphi=e^{2\pi\mbox{ad}_{X_6}}$, then 
$$\begin{array}{rl}
\varphi(\alpha^1)=& \alpha^1+2\pi\alpha^2+2\pi^2\alpha^3, \\ 
\varphi(\alpha^2)=& \alpha^2+2\pi\alpha^3, \\ 
\varphi(\alpha^3)=& \alpha^3, \\
\varphi(\alpha^4)=& \alpha^4, \\ 
\varphi(\alpha^5)=& \alpha^5.\end{array}$$

In this case $\forall \, i=1,\cdots,5\quad \alpha^i \in U$, then $U\equiv H^*(\mathbb{R}^5)$ and $$\mathcal{M}_U\equiv\mathcal{M}_U^1=(\bigwedge(e,f,z,p,q),0).$$
\\Knowing the cohomology groups of the solvmanifold we can compute its minimal model: {\small $$\mathcal{M}_S=(\bigwedge(A,e,f,z,p,q),D),\;DA=De=Df=Dz=0,\,Dp=eA,\,Dq=pA$$}
with the map $\tau:\mathcal{M}_S \rightarrow \bigwedge^*S$ given by  $$\tau(A)=\alpha^6,\tau(e)=\alpha^3,\tau(f)=\alpha^4,\tau(z)=\alpha^5,\tau(p)=\alpha^2,\tau(q)=\alpha^1.$$

Then for Theorem \ref{formality} $S_6$ is not 1-formal.

\medskip 

Now consider the symplectic forms on $S_6$. \\In this case the generic co-symplectic structure on $U$ is given by $$F=\sum_{1\leq i<j\leq 5}a_{ij}\alpha^{ij} \quad \mbox{and}\quad \eta=\sum_{1\leq k\leq 5}b_k\alpha^k$$
$$\begin{array}{rcl}
\mbox{with}\;F^2\wedge\eta \neq 0 &\Leftrightarrow& b_5(a_{12}a_{34}-a_{13}a_{24}+a_{14}a_{23})+\\
&&+b_4(a_{12}a_{35}-a_{13}a_{25}+a_{15}a_{23})+\\
&&+b_3(a_{12}a_{45}-a_{14}a_{25}+a_{15}a_{24})+\\
&&+b_2(a_{13}a_{45}-a_{14}a_{35}+a_{15}a_{34})+\\ 
&&+b_1(a_{23}a_{45}-a_{24}a_{35}+a_{25}a_{34})\neq 0.\end{array}$$

Let $x \in \mathcal{M}_U^2$ and $y \in \mathcal{M}_U^1$ such that $\tau(x)=F$ and $\tau(y)=\eta$, then 
$$x=a_{12}qp+a_{13}qe+a_{14}qf+a_{15}qz+a_{23}pe+a_{24}pf+a_{25}pz+a_{34}ef+a_{35}ez+a_{45}fz$$
and $y=b_1q+b_2p+b_3e+b_4f+b_5z$.

\smallskip 

The element $s:=x+yA \in \mathcal{M}_S^2$ is closed if and only if $$x'=a_{12}eq+a_{13}ep+a_{14}fp+a_{15}zp-a_{24}ef-a_{25}ez=0$$ that is if and only if $ a_{12}=a_{13}=a_{14}=a_{15}=a_{24}=a_{25}=0$.

Then if we consider $F=a_{23}\alpha^{23}+a_{34}\alpha^{34}+a_{35}\alpha^{35}+a_{45}\alpha^{45}$ and\\ $\eta=\sum_{1\leq k\leq 5}b_k\alpha^k$ with $b_1\neq 0,\,a_{23}\neq 0,\,a_{45}\neq 0$, we have a symplectic structure on $S$ given by $\omega:=F+\eta\wedge\alpha^6$.

\smallskip

\subsection{An example in dimension 8}
Now consider the almost abelian solvmanifold $S_8$ defined by the action of 
$$\mbox{ad}_{X_8}=\left(\begin{array}{ccccccc}
0&1&0&0&0&0&0\\
0&0&1&0&0&0&0\\
0&0&0&0&0&0&0\\
0&0&0&b&1&0&0\\
0&0&0&-1&b&0&0\\
0&0&0&0&0&-b&1\\
0&0&0&0&0&-1&-b
\end{array}\right)\qquad b\neq 0,  \; e^{2\pi b}+e^{-2\pi b} \in \mathbb{Z}$$
with lattice generated by $t=2\pi$.

Again this solvmanifold is diffeomorphic to the $8$-dimensional, almost abelian, completely solvable solvmanifold $\tilde{G}/\Gamma_{2\pi}$ with $\tilde{G}=\mathbb{R}\ltimes_{\tilde{\varphi}}\mathbb{R}^7$ and 
$$\tilde{\varphi}=\left(\begin{array}{ccccccc}
0&1&0&0&0&0&0\\
0&0&1&0&0&0&0\\
0&0&0&0&0&0&0\\
0&0&0&b&0&0&0\\
0&0&0&0&b&0&0\\
0&0&0&0&0&-b&0\\
0&0&0&0&0&0&-b
\end{array}\right).$$

Then its cohomology groups are isomorphic to those of the Lie algebra $\tilde{\mathfrak{g}}$ given by 
$\quad  [X_2,X_8]=X_1, [X_3,X_8]=X_2, [X_4,X_8]=bX_4, [X_5,X_8]=bX_5,$ \\ $ [X_6,X_8]=-bX_6, [X_7,X_8]=-bX_7.$

In particular we have $H^1(S)=\langle \alpha^3, \alpha^8\rangle$.

\medskip 

Now we compute $U$:
$$\begin{array}{rl}
\varphi(\alpha^1)=& \alpha^1+2\pi\alpha^2+2\pi^2\alpha^3,\\
\varphi(\alpha^2)=& \alpha^2+2\pi\alpha^3,\\
\varphi(\alpha^3)=& \alpha^3,\\
\varphi(\alpha^4)=& e^{2\pi b}\alpha^4,\\
\varphi(\alpha^5)=& e^{2\pi b}\alpha^5,\\
\varphi(\alpha^6)=& e^{-2\pi b}\alpha^6,\\
\varphi(\alpha^7)=& e^{-2\pi b}\alpha^7,\\
\end{array}$$
then $$\begin{array}{rl}
U^1=& \langle \alpha^1, \alpha^2, \alpha^3\rangle, \\
U^2=&\langle \alpha^{12},\alpha^{13},\alpha^{23}, \alpha^{46},\alpha^{47}, \alpha^{56},\alpha^{57}\rangle\\
U^3=&\langle \alpha^{123},\alpha^{146},\alpha^{147}, \alpha^{156},\alpha^{157},\alpha^{246},\alpha^{247}, \alpha^{256},\alpha^{257},\\
&\alpha^{346},\alpha^{347}, \alpha^{356},\alpha^{357} \rangle\\
U^4=&\langle \alpha^{1246},\alpha^{1247},\alpha^{1256},\alpha^{1257},\alpha^{1346},\alpha^{1347},\alpha^{1356},\alpha^{1357},\\
&\alpha^{2346},\alpha^{2347},\alpha^{2356},\alpha^{2357},\alpha^{4567} \rangle\\
U^5=&\langle \alpha^{12346},\alpha^{12347},\alpha^{12356},\alpha^{12357},\alpha^{14567},\alpha^{24567},\alpha^{34567} \rangle\\
U^6=&\langle \alpha^{124567},\alpha^{134567},\alpha^{234567} \rangle \\
U^7=&\langle \alpha^{1234567} \rangle.\end{array}$$

\medskip

The minimal cdga $\mathcal{M}_U$ is quite difficult to compute, indeed the big dimension of $U^2$ implies a need of many generators in degree $2$, and then many relations to check to get the cohomology isomorphism.\\
Fortunately we do not need to construct all $\mathcal{M}_U$ and $\mathcal{M}_S$ to understand if the solvmanifold is formal: 
we can simply find out that $\mathcal{M}_U^1=(\bigwedge(x,y,z),0)$, but $H^1(S)=\langle \alpha^3, \alpha^8\rangle$, then $$\mathcal{M}_S^1=(\bigwedge(A,x,y,z),D)\;\mbox{with}\; DA=Dx=0, Dy=xA,\,Dz=yA$$ and so for Theorem \ref{formality} $S$ is not $1$-formal.

\bigskip 

\noindent \textbf{Acknowledgments.}  We would like to thank Hisashi Kasuya, Andrea Mori and Luis Ugarte for many useful comments and suggestions.

\bigskip

\end{document}